\definecolor{deepgreen}{cmyk}{0.99998,0,1,0}
\theoremstyle{definition}
\newtheorem{theorem}{Theorem}[section]
\newtheorem{proposition}[theorem]{Proposition}
\newtheorem{lemma}[theorem]{Lemma}
\newtheorem{example}[theorem]{Example}
\newtheorem{problem}[theorem]{Problem}
\newtheorem{open question}[theorem]{Open Question}
\theoremstyle{definition}
\newtheorem{definition}[theorem]{Definition}
\numberwithin{equation}{section}
\theoremstyle{definition}
\newcommand{\pa}{\partial}
\newcommand{\bv}{\big\vert}
\newcommand{\cref}{\S\ \ref}
\newcommand{\Cref}{\S\ \ref}
\newcommand{\mb}{\mathbb}
\newcommand{\Sc}{\mathrm{Sc}}
\DeclareMathOperator{\Ric}{Ric}
\numberwithin{equation}{section}
\newcommand{\interior}[1]{%
	{\kern0pt#1}^{\mathrm{\,o}}%
}
\definecolor{pink}{RGB}{249,164,186}
\definecolor{grassgreen}{RGB}{128,255,0}
\numberwithin{equation}{section}
\title[Bounding the A-hat genus using scalar curvature]{Bounding the A-hat genus using scalar curvature lower bounds and isoperimetric constants}
\author{Qiaochu Ma$^1$}
\author{Jinmin Wang$^2$}
\author{Guoliang Yu$^1$}
\author{Bo Zhu$^1$}
\address{$^1$Department of Mathematics, Texas A\&M University}
\address{$^2$Institute of Mathematics, Chinese Academy of Sciences}
\date{}
\newcommand*{\rom}[1]{\expandafter\@slowromancap\romannumeral #1@}
\begin{document} 
\clearpage

\begin{abstract}
In this paper, we prove an upper bound on the $\widehat{A}$  genus of a smooth, closed, spin Riemannian manifold using its scalar curvature lower bound, Neumann isoperimetric constant, and volume. The proof of this result relies on spectral analysis of the Dirac operator. We also construct an example to show that the Neumann isoperimetric constant in this bound is necessary. Our result partially answers a question of Gromov on bounding characteristic
numbers using scalar curvature lower bound.
\end{abstract}
\maketitle
\section{Introduction}

\subsection{Backgrounds}\label{A1''}
The scalar curvature $\Sc_g$ of an $m$-dimensional Riemannian manifold $(M, g)$ can be simply defined by the following formula:
\begin{equation}
    \frac{\mathrm{Vol}(B_M (x, r))}{\mathrm{Vol}(B_{ \mathbb{R}^m} (0, r))}= 1- \frac{\Sc_g (x)}{ 6(m+2)} r^2 +O(r^2),
\end{equation}
where the notation $\mathrm{Vol}$ stands for volume,   $x$ is a point in $M$, $B_M (x, r)$ is the geodesic ball in $M$ with radius $r$
and center $x$, and $B_{ \mathbb{R}^m} (0, r)$ is the ball with radius $r$ in the Euclidean space $\mathbb{R}^m$ with the standard Euclidean metric.

Scalar curvature encodes the information about the volume of small geodesic balls within a Riemannian manifold and plays a crucial role in both differential geometry and general relativity, such as the proof of the positive mass theorem by Schoen-Yau \cite{Schoen_Yau_pmt1}. This raises a question of what topological constraints scalar curvature imposes on a manifold. According to the classic Lichnerowicz theorem \cite{Lichnerowicz_formula}, a smooth, closed, spin Riemannian manifold with positive scalar curvature has a vanishing $\widehat{A}$ genus. 
When a spin manifold is simply connected, Stolz shows that the 
$\widehat{A}$ genus is the only obstruction to the existence of Riemannian metrics with positive 
scalar curvature in dimension greater than or equal to $5$ \cite{Stolz_simply_connected}. In the presence of a fundamental group, vanishing $\widehat{A}$ genus is not sufficient for the  existence of Riemannian metrics with positive scalar curvature. Schoen-Yau and Gromov-Lawson proved that the $m$-dimensional torus $\mathbb{T}^m$  does not admit any Riemannian metric with positive scalar curvature despite having  vanishing $\widehat{A}$ genus (see \cites{GromovLawson, Schoen_Yau_imcompressible, Schoen_Yau_structure_psc,Schoen_Yau_psc_higher} for  details). More generally, Rosenberg proves that the higher $\widehat{A}$ genera are obstructions to the existence of Riemannian metrics with positive 
scalar curvature if the strong Novikov conjecture holds \cites{Rosenberg_1,Rosenberg_psc_report}. Witten shows that for a $4$-dimensional manifold $M$, if $M$  has a non-zero Seiberg-Witten invariant, then $M$  admits no
 complete Riemannian metric of positive scalar curvature \cite{Witten_monopoles_four}. As a consequence, using the work of Taubes \cite{Taubes_Seiberg_Witten}, there exists a simply connected $4$-dimensional manifold with vanishing $\widehat{A}$ genus that admits no  Riemannian metric with positive scalar curvature. For foliated manifolds, Connes, Tang-Willett-Yao, and Zhang used  higher index theoretic invariants to prove striking nonexistence results  for Riemannian metrics with leafwise positive scalar curvature \cite{Zhang_psc_foliation,Connes_transverse_fundamental_class, TWY_Roe}.

 Kazdan-Warner proved a surprising result stating on a compact manifold of dimension greater than or equal to $3$, every smooth function which is
negative somewhere, is the scalar curvature of some complete Riemannian metric  \cite{Kazdan_Warner_scalar_curvature}. 
Gromov demonstrated that having a scalar curvature bounded below by a (possibly negative) constant $k_0$ 
  is a $C^0$
  property (see \cite[\S\,1.8, $C^0$-Limit Theorem]{Gromov_C0}, Bamler provided an alternative proof of this result using Ricci flow \cite{Bamler_C0_RF_proof}). This means that if a sequence of smooth complete Riemannian metrics with  scalar curvature lower bound $k_0$
  converges to a smooth Riemannian metric in the $C^0$
  sense, the scalar curvature of the limit metric also maintains the same lower bound $k_0$. In this article we investigate the topological complexity of spin Riemannian manifolds with scalar curvature lower bound. Specifically, we show that the $\widehat{A}$ genus of any closed spin Riemannian manifold can be constrained by the  scalar curvature lower bound, the Neumann isoperimetric constant and the manifold's volume. We also illustrate that this result is optimal by constructing an example to 
  demonstrate that the Neumann isoperimetric constant is necessary.  Our result partially answers a question of Gromov on  bounding
  characteristic numbers using scalar curvature lower bound (see \cite[\S\,3.13, Charateristic Numbers Conjecture]{Gromov4lectures}).

In the three dimensional case, 
Perelman’s monotonicity formula for the
Ricci flow with surgery implies
that simplicial volume, an invariant describing  topological complexity of the fundamental class in homology theory, can be bounded by the scalar curvature lower bound (see Agol-Storm-William \cite[Theorem 3.1]{Agol_simplicial_volume}). In another direction, Braun-Sauer  prove that the Euler characteristic, simplicial volume and  $\ell^2$-Betti numbers of a closed aspherical Riemannian manifold can be bounded by a lower bound on the macroscopic scalar curvature and its volume \cite[Theorem 1.4]{Sabine_Roman_volume}.  Recall that macroscopic scalar curvature is a large scale analogue of scalar curvature and measures the volumes of relatively large balls within the manifold, rather than the smaller balls used to define standard scalar curvature at a point. 

\subsection{Main result}
Let us fix an integer $m\geqslant 3$. Let $M$ be a spin manifold with $\dim_\mathbb{R}M=m$. We equip $M$ with an Riemannian metric $g$, then let $dv_M$ be the volume form on $M$, $\mathrm{Vol}(M)$ the volume of $M$, $\nabla^{TM}$ the Levi-Civita connection on $TM$, $R^{TM}$ the curvature and $\mathrm{Sc}_g$ be the scalar curvature. Let $\widehat{A}(M)$ denote the A-hat genus of $M$ and $IN(M)$ be the Neumann isoperimetric constant of $M$ (see \eqref{2.3} and \eqref{2.7'} for definitions).

 Now we state our main result.

\begin{theorem}\label{T1.1} If a closed spin Riemannian  manifold $M$ with dimension $m$ has scalar curvature $\Sc_g\geqslant k_0$ for some constant $k_0$, then
there exists a constant $C_m>0$ depending only on $m$ such that
\begin{equation}\label{1.1}
\bv\widehat{A}(M)\bv\leqslant C_m\Big(\frac{\vert k_0\vert^{m/2}\mathrm{Vol}(M)}{IN(M)^m}+1\Big).
\end{equation}
\end{theorem}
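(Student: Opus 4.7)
The strategy is to bound $|\widehat{A}(M)|$ by $\dim\ker D$ via the Atiyah--Singer index theorem, and then to bound $\dim\ker D$ by a heat-trace argument that couples the Lichnerowicz formula with isoperimetric-controlled on-diagonal heat kernel upper bounds. Since $\widehat{A}(M)=\mathrm{ind}(D^+)$, one has the trivial estimate $|\widehat{A}(M)|\leq \dim\ker D^++\dim\ker D^- = \dim\ker D$, so the task reduces to bounding $\dim\ker D$ from above.

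The workhorse is the heat-trace majorization $\dim\ker D \leq \tro\,e^{-tD^2}$, valid for every $t>0$. I would combine three classical ingredients. First, the Lichnerowicz formula $D^2=\nabla^*\nabla+\Sc_g/4$ together with $\Sc_g\geq k_0$ gives, by min-max, $\lambda_j(D^2)\geq \lambda_j(\nabla^*\nabla)+k_0/4$ for every $j$, whence
\[
\tro\,e^{-tD^2}\ \leq\ e^{-tk_0/4}\,\tro\,e^{-t\nabla^*\nabla}.
\]
Second, Kato's inequality $\bigl|\nabla|\psi|\bigr|\leq|\nabla\psi|$ and the Hess--Schrader--Uhlenbrock domination principle give the pointwise operator-norm bound $\|K_t^{\nabla^*\nabla}(x,y)\|_{\mathrm{op}}\leq p_t(x,y)$, with $p_t$ the heat kernel of the Laplace--Beltrami operator on $(M,g)$; taking the fibre trace yields $\tro_{S_x}K_t^{\nabla^*\nabla}(x,x)\leq 2^{\lfloor m/2\rfloor}p_t(x,x)$. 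Third, the Neumann isoperimetric inequality implies a Faber--Krahn inequality $\lambda_1(\Omega)\geq c_m\,IN(M)^2\,\mathrm{Vol}(\Omega)^{-2/m}$ for all $\Omega\subset M$ with $\mathrm{Vol}(\Omega)\leq \mathrm{Vol}(M)/2$, which by standard Nash--Moser--Grigor'yan arguments upgrades to
\[
p_t(x,x)\ \leq\ \frac{C_m}{IN(M)^m\,t^{m/2}}\qquad\text{for } 0<t\leq T_0,\quad T_0\sim\frac{\mathrm{Vol}(M)^{2/m}}{IN(M)^2},
\]
while for $t>T_0$ the spectral-gap bound $p_t(x,x)\lesssim 1/\mathrm{Vol}(M)$ takes over.

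Assembling these pieces gives, for every $t>0$,
\[
\dim\ker D\ \leq\ C_m\,e^{t|k_0|/4}\cdot\max\!\left(\frac{\mathrm{Vol}(M)}{IN(M)^m\,t^{m/2}},\ 1\right).
\]
The natural optimizer is $t_\star=2m/|k_0|$, which minimizes $e^{t|k_0|/4}t^{-m/2}$. If $t_\star\leq T_0$, inserting $t=t_\star$ produces the main term $C_m'\,|k_0|^{m/2}\mathrm{Vol}(M)/IN(M)^m$. If instead $t_\star>T_0$, then $|k_0|T_0\lesssim m$, so at $t=T_0$ the exponential factor is bounded by $e^{m/2}$ and the bound collapses to an $O_m(1)$ constant, which supplies the additive ``$+1$'' term in \eqref{1.1}. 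The case $k_0\geq 0$ is even easier: the Lichnerowicz identity forces any harmonic spinor to be parallel, and the space of parallel spinors has dimension at most $2^{\lfloor m/2\rfloor}$, again absorbed by the $+1$.

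The main obstacle I anticipate is the isoperimetric step: producing the heat kernel bound in the precise quantitative form above with constants depending only on $m$ and $IN(M)$, and correctly gluing the small-$t$ diffusive regime to the large-$t$ spectral-gap regime so as to yield exactly the additive shape $|k_0|^{m/2}\mathrm{Vol}(M)/IN(M)^m+1$ rather than something multiplicative. The index-theoretic, Lichnerowicz, and Kato steps are robust classical tools whose dimensional constants are transparent; by contrast, the Faber--Krahn-to-heat-kernel passage must be carried out carefully so as not to introduce any unwanted volume or curvature dependence beyond what appears in the final inequality.
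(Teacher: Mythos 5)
Your proposal is correct, and it reaches the theorem by a genuinely different route from the one in the paper. Both arguments begin identically, reducing $|\widehat{A}(M)|$ to $\dim_{\mathbb{C}}\ker D$ via Atiyah--Singer. From there the paper works directly with a single harmonic spinor: it uses the pointwise distributional form of Kato's inequality to turn the Lichnerowicz equation $\Delta^{S_M}s=-\tfrac{1}{4}\Sc_g\, s$ into the scalar subelliptic inequality $\Delta^M\lVert s\rVert\leq \tfrac{|k_0|}{4}\lVert s\rVert$, runs Moser iteration against the Poincar\'e--Wirtinger inequality controlled by $IN(M)$ to obtain an $L^2\to L^\infty$ bound on $\ker D$, and then closes with Peter Li's reproducing-kernel dimension count (Lemma~7.2 of \cite{MR2962229}). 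You instead pass through the heat trace $\dim\ker D\leq\tro\, e^{-tD^2}$, use the semigroup form of Kato domination to compare with the scalar heat kernel, invoke Faber--Krahn/Grigor'yan-type on-diagonal upper bounds derived from $IN(M)$, and optimize in $t$. These are two faces of the same analytic content --- Moser iteration on one side, Nash--Faber--Krahn ultracontractivity on the other --- and the final case split ($t_\star\leq T_0$ versus $t_\star>T_0$) corresponds exactly to the two terms $|k_0|^{m/4}IN(M)^{-m/2}$ and $\mathrm{Vol}(M)^{-1/2}$ in the paper's $L^\infty$ estimate. Your approach is more modular, importing heat-kernel bounds off the shelf; the paper's is more self-contained and keeps all dimensional constants visible through the iteration, which is why it is the cleaner choice when one wants the constant $C_m$ to be genuinely explicit. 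You correctly flag the Faber--Krahn-to-heat-kernel passage as the place where care is needed to avoid smuggling in extra volume or curvature dependence; that passage is standard but does require quoting Grigor'yan or Carlen--Kusuoka--Stroock with the localized Sobolev constant precisely as in the paper's inequality \eqref{2.8}.
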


Note that in the upper bound of above theorem, both $\vert k_0\vert^{m/2}\mathrm{Vol}(M)$ and $IN(M)$ are invariant under rescaling of the Riemannian metric.

We shall  provide examples showing that the Neumann isoperimetric constant $IN(M)$ in the above inequality \eqref{1.1} is essential. 

In \cite{Gallot_isoperimetric}*{Theorem 11}, Gallot proved that  $\widehat{A}(M)$ can be bounded
by a lower bound on Ricci curvature and the diameter  of $M$. By \cite{Gallot_isoperimetric}*{Theorem 3}, lower bounds on Ricci curvature and diameter of a Riemannian manifold  can be used to provide a positive lower bound 
for the Neumann isoperimetric constant (with the additional help of its volume).
This implies that Theorem \ref{1.1} can be viewed as a generalization of the finiteness  result on $\widehat{A}(M)$ given by \cite{Gallot_isoperimetric}*{Theorem 11}.

\subsection{Acknowledgment}

We are grateful to Professor Ruobing Zhang  for enlightening discussions regarding the Sobolev inequality and the Neumann isoperimetric constant and for bringing the relevant sections of Li \cite{MR2962229} to our attention. We also thank Professor Guofang Wei and Professor Zhizhang Xie for inspiring discussions and for pointing out that our result can be considered as a generalization of  Gallot \cite{Gallot_isoperimetric}*{Theorem 11}.  The third author is partially supported by NSF 2247322
and the fourth author is partially supported by the AMS Simons Travel Grant.

\section{Preliminaries}\label{A}

In this section, we review some basic concepts and tools that will be used in the proof of the main theorem of this article.

\subsection{Lichnerowicz formula and Atiyah-Singer index theorem}

Let $S_M=S_M^+\oplus S_M^-$ be the spinor bundle of $M$ with the canonical Hermitian metric $h^{S_M}$ and the Clifford connection $\nabla^{S_M}$. Let $D^{S_M}$ be the Dirac operator and $\Delta^{S_M}$ the nonnegative Laplacian, both acting on $C^\infty(M,S_M)$. 

We recall the classical Lichnerowicz formula in \cite{Lichnerowicz_formula} as follows.
\begin{lemma}We have
\begin{equation}\label{2.1}
	\big(D^{S_M}\big)^2=\Delta^{S_M}+\frac{\mathrm{Sc}_g}{4}.
\end{equation}
\end{lemma}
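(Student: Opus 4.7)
The plan is to carry out the classical Bochner-type calculation for the Dirac operator, which is well known but whose main content lies in a careful use of Clifford algebra identities and the first Bianchi identity. Throughout, I would work at an arbitrary point $p \in M$ and fix a local orthonormal frame $\{e_i\}$ of $TM$ near $p$ which is \emph{synchronous at $p$}, meaning $\nabla^{TM}_{e_i} e_j = 0$ at $p$; such a frame simplifies the bookkeeping because all Christoffel-symbol contributions vanish at $p$ and $[e_i,e_j]_p = 0$.

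First, I would write the Dirac operator locally as $D^{S_M} = \sum_i c(e_i)\nabla^{S_M}_{e_i}$, where $c(\cdot)$ denotes Clifford multiplication, and square it. Since $\nabla^{S_M}$ is compatible with Clifford multiplication and the frame is synchronous at $p$, at the point $p$ one obtains
\begin{equation*}
(D^{S_M})^2 = \sum_{i,j} c(e_i) c(e_j)\, \nabla^{S_M}_{e_i}\nabla^{S_M}_{e_j}.
\end{equation*}
Splitting this into the diagonal part $i=j$ and the off-diagonal part, and using $c(e_i)^2 = -1$ together with the fact that $\nabla^{S_M}_{e_i}\nabla^{S_M}_{e_i} = -\Delta^{S_M}$ at $p$ in the synchronous frame (since $\nabla_{\nabla_{e_i}e_i} = 0$ there), I would identify the diagonal contribution with $\Delta^{S_M}$. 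For the off-diagonal terms, antisymmetry $c(e_i)c(e_j) = -c(e_j)c(e_i)$ for $i \neq j$ allows one to rewrite the sum as
\begin{equation*}
\sum_{i<j} c(e_i)c(e_j)\,\big(\nabla^{S_M}_{e_i}\nabla^{S_M}_{e_j} - \nabla^{S_M}_{e_j}\nabla^{S_M}_{e_i}\big) = \sum_{i<j} c(e_i)c(e_j)\, R^{S_M}(e_i,e_j),
\end{equation*}
where $R^{S_M}$ is the curvature of the spinor connection (the bracket term $\nabla^{S_M}_{[e_i,e_j]}$ vanishes at $p$).

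The remaining task, which is the main algebraic step, is to show that this curvature term equals $\mathrm{Sc}_g / 4$. I would use the standard identity expressing the curvature of the spin connection in terms of the Riemann curvature:
\begin{equation*}
R^{S_M}(e_i,e_j) = \frac{1}{4}\sum_{k,\ell} \blb{R^{TM}(e_i,e_j)e_k, e_\ell} c(e_k)c(e_\ell).
\end{equation*}
Substituting this in, the sum becomes a quadruple sum of Riemann curvature components weighted by products $c(e_i)c(e_j)c(e_k)c(e_\ell)$. The obstacle, and the computational heart of the proof, is to organize this sum by separating the cases according to how many of the indices $\{i,j,k,\ell\}$ coincide: the terms where $\{i,j\}\cap\{k,\ell\}$ has cardinality $0$ vanish because of the first Bianchi identity (the Clifford product is totally antisymmetric in distinct indices while the Riemann tensor summed cyclically is zero); the terms with $\{i,j\} = \{k,\ell\}$ contribute the Ricci contraction, which after a second contraction produces $\mathrm{Sc}_g/4$; the intermediate cases cancel by symmetry of $R^{TM}$. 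Carrying out this bookkeeping carefully yields $\sum_{i<j} c(e_i)c(e_j) R^{S_M}(e_i,e_j) = \mathrm{Sc}_g/4$, and combining with the Laplacian contribution gives \eqref{2.1}. Since $p$ was arbitrary, the identity holds globally.
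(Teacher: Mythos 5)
The paper does not prove this lemma: it simply records the identity as the classical Lichnerowicz formula and refers to Lichnerowicz's original 1963 note (and, elsewhere in the section, to standard references such as Berline--Getzler--Vergne), so there is no in-paper argument to compare your proposal against. Your sketch is the standard Bochner--Lichnerowicz computation --- square $D^{S_M}$ in a synchronous orthonormal frame, separate the diagonal terms (giving the Bochner Laplacian $\Delta^{S_M}$) from the Clifford-twisted curvature $\sum_{i<j}c(e_i)c(e_j)R^{S_M}(e_i,e_j)$, express the spinor curvature in terms of $R^{TM}$, and then contract using the first Bianchi identity together with the symmetries of the Riemann and Ricci tensors to extract $\mathrm{Sc}_g/4$ --- and that argument is correct. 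The one place where the bookkeeping is stated a bit loosely is the case analysis of the quadruple Clifford sum: it is the doubly-matching case $\{i,j\}=\{k,\ell\}$ that produces the scalar-curvature term directly, while the singly-matching terms contract to $\sum_{i\neq\ell}\mathrm{Ric}_{i\ell}\,c(e_i)c(e_\ell)$ and vanish because the symmetric Ricci tensor is paired against the antisymmetric $c(e_i)c(e_\ell)$; but the overall structure and the constant $1/4$ come out right.
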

For $D^{S_M^\pm}=D^{S_M}|_{C^\infty(M,S_M^\pm)}$, we define the Fredholm index of $D^{S_M^+}$ by
\begin{equation}\label{2.2}
		\mathrm{Ind}\big(D^{S_M^+}\big)=\dim_\mathbb{C}\ker \big(D^{S_M^+}\big)-\dim_\mathbb{C}\ker \big(D^{S_M^-}\big).
\end{equation}

A remarkable property is that the Fredholm index is invariant under continuous deformation of the operator. It is this property that makes the Fredholm index computable.

The A-hat genus $\widehat{A}(M)$ of $M$ is defined by
\begin{equation}\label{2.3}
\widehat{A}(M)=(4\pi\sqrt{-1})^{-m/2}\int_M\mathrm{Tr}^{TX}\Big[{\det}^{1/2}\Big(\frac{R^{TM}}{\sinh (R^{TM})}\Big)\Big].
\end{equation}
The Atiyah-Singer index formula \cite{MR236950,MR236952} reads as follows, see also Berline-Getzler-Vergne \cite[Theorem 4.9]{MR2273508}.
\begin{theorem}We have
\begin{equation}\label{2.4}
		\mathrm{Ind}\big(D^{S_M^+}\big)=\widehat{A}(M).
\end{equation}
\end{theorem}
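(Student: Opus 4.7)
The plan is to establish \eqref{2.4} via the heat kernel approach, since that is the most analytic route and fits naturally with the spectral methods used elsewhere in this paper. The key conceptual tool is the McKean--Singer formula, which expresses the index as a supertrace of the heat operator for every positive time, combined with a short-time asymptotic analysis of the heat kernel to identify the limit with the $\widehat{A}$-form.

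First, using the Lichnerowicz formula \eqref{2.1}, one knows that $\bigl(D^{S_M}\bigr)^2$ is an elliptic, essentially self-adjoint operator with compact resolvent on the closed manifold $M$; in particular $e^{-t(D^{S_M})^2}$ is trace class for all $t>0$ and preserves the $\Z/2$-grading $S_M = S_M^+ \oplus S_M^-$. The McKean--Singer identity then reads
\begin{equation*}
\mathrm{Ind}\bigl(D^{S_M^+}\bigr) = \mathrm{Tr}_s\bigl(e^{-t(D^{S_M})^2}\bigr) = \int_M \mathrm{tr}_s\bigl(k_t(x,x)\bigr)\, dv_M(x),
\end{equation*}
valid for every $t>0$, where $k_t$ denotes the heat kernel of $(D^{S_M})^2$ acting on $S_M$ and $\mathrm{tr}_s$ is the fiberwise supertrace. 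The right-hand side is $t$-independent, so it can be computed in the limit $t \downarrow 0$.

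The central step is to determine the pointwise limit $\lim_{t\to 0}\mathrm{tr}_s\bigl(k_t(x,x)\bigr)$. Here I would carry out Getzler's rescaling: work in normal coordinates centered at a point $x_0 \in M$, use a synchronous trivialization of $S_M$, and introduce the rescaled variables obtained by dilating the base coordinates by $t^{1/2}$ while simultaneously rescaling the Clifford action so that $c(e^i) \mapsto t^{-1/2} e^i \wedge - t^{1/2} \iota_{e^i}$. Under this rescaling, the heat equation $(\partial_t + (D^{S_M})^2) k_t = 0$ converges in the limit $t\to 0$ to a harmonic oscillator-type equation whose coefficients are determined by the curvature $R^{TM}(x_0)$, and its fundamental solution is given by the Mehler formula. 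Extracting the top degree component of the supertrace and applying the classical computation of the Mehler kernel yields
\begin{equation*}
\lim_{t\to 0}\mathrm{tr}_s\bigl(k_t(x_0,x_0)\bigr) = (4\pi\sqrt{-1})^{-m/2}\,{\det}^{1/2}\Bigl(\frac{R^{TM}}{\sinh(R^{TM})}\Bigr)\Big|_{x_0},
\end{equation*}
after identifying differential forms on $M$ with antisymmetrized Clifford generators and reading off the top-degree part. Integrating over $M$ and comparing with \eqref{2.3} completes the identification.

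The main obstacle is the Getzler rescaling argument itself: controlling the short-time asymptotic expansion of $k_t(x,x)$ uniformly while justifying that only the top form-degree of the expansion survives the supertrace (the ``fantastic cancellation'' of Patodi). This requires care with the scaling of Clifford generators and with keeping track of which terms in the heat kernel expansion contribute after one extracts $\mathrm{tr}_s$; all lower-order curvature monomials must vanish, leaving exactly the $\widehat{A}$-form. Once this local index theorem is in place, \eqref{2.4} follows immediately from the McKean--Singer identity. A complete treatment along these lines is given in \cite[Theorem 4.9]{MR2273508}, which we cite for the full analytic details.
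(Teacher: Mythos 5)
Your outline is correct and follows exactly the route the paper points to: the paper does not prove this theorem itself but cites Atiyah--Singer and Berline--Getzler--Vergne \cite[Theorem 4.9]{MR2273508}, and your McKean--Singer plus Getzler-rescaling sketch is precisely the heat-kernel proof given in that reference, which you also correctly cite for the analytic details.
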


\subsection{Kato's inequality}

Let $\Delta^M$ be the nonnegative Laplacian acting on $C^\infty(M)$. Now we recall the Kato's inequality for vector bundles by Hess-Schrader-Uhlenbrock \cite[proposition 2.2]{MR602436}, which we apply here specifically to $(M, S_M)$.
\begin{proposition}
For any $s\in C^\infty(M,S_M)$, we have the following inequality in the space $\mathscr{D}'(M)$ of distributions
\begin{equation}\label{2.5}
	\Delta^M\Vert s\Vert_{S_M}\leqslant\mathrm{Re}\langle \Delta^{S_M}s,\mathrm{sign}_\xi s\rangle_{S_M},
\end{equation}
where $\xi$ is an arbitrary measurable section of the unit sphere bundle of $S_M$ and
\begin{equation}
\mathrm{sign}_\xi s=\begin{cases}s/\Vert s\Vert_{S_M}\ \ &\text{on}\ \mathrm{supp}(s),\\
	\xi\ \ &\text{otherwise}.
\end{cases}
\end{equation}
\end{proposition}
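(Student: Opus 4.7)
The plan is to run the classical two-step Kato argument: first prove a smooth pointwise version of the inequality on the complement of the zero set of $s$, then extend it to a distributional inequality on all of $M$ via a standard $\sqrt{\|s\|^2+\epsilon^2}$ regularization.

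\medskip

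On the open set $U=\{x\in M:s(x)\neq 0\}$ the function $\|s\|_{S_M}$ is smooth. The pointwise Bochner identity
\begin{equation*}
\Delta^M\langle s,s\rangle_{S_M}=2\,\mathrm{Re}\langle\Delta^{S_M}s,s\rangle_{S_M}-2\|\nabla^{S_M}s\|^2,
\end{equation*}
together with the scalar identity $\Delta^M\|s\|^2=2\|s\|\Delta^M\|s\|-2|\nabla\|s\||^2$, yields
\begin{equation*}
\|s\|\,\Delta^M\|s\|=\mathrm{Re}\langle\Delta^{S_M}s,s\rangle_{S_M}-\|\nabla^{S_M}s\|^2+|\nabla\|s\||^2.
\end{equation*}
The pointwise Kato gradient estimate $|\nabla\|s\||\leqslant\|\nabla^{S_M}s\|$ — a direct consequence of $\nabla_v\|s\|=\|s\|^{-1}\mathrm{Re}\langle\nabla^{S_M}_v s,s\rangle$ and Cauchy--Schwarz — then gives the desired inequality on $U$ after dividing by $\|s\|$.

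\medskip

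To upgrade this to a distributional statement on all of $M$, set $u_\epsilon:=\sqrt{\|s\|_{S_M}^2+\epsilon^2}$, which is smooth on $M$. The same computation as above, but applied to $u_\epsilon^2=\|s\|^2+\epsilon^2$, gives the pointwise smooth bound
\begin{equation*}
u_\epsilon\,\Delta^M u_\epsilon=\mathrm{Re}\langle\Delta^{S_M}s,s\rangle_{S_M}-\|\nabla^{S_M}s\|^2+|\nabla u_\epsilon|^2,
\end{equation*}
and since $|\nabla u_\epsilon|^2=|\mathrm{Re}\langle\nabla^{S_M}s,s\rangle|^2/u_\epsilon^2\leqslant\|\nabla^{S_M}s\|^2\|s\|^2/u_\epsilon^2\leqslant\|\nabla^{S_M}s\|^2$ by Cauchy--Schwarz, we obtain the smooth inequality
\begin{equation*}
\Delta^M u_\epsilon\leqslant\mathrm{Re}\bigl\langle\Delta^{S_M}s,\,s/u_\epsilon\bigr\rangle_{S_M}
\end{equation*}
valid at every point of $M$.

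\medskip

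The final step is to pass $\epsilon\downarrow 0$ inside $\mathscr{D}'(M)$. Since $0\leqslant u_\epsilon-\|s\|\leqslant\epsilon$, the convergence $u_\epsilon\to\|s\|$ is uniform, so $\Delta^M u_\epsilon\to\Delta^M\|s\|$ in $\mathscr{D}'(M)$. For the right-hand side, $s/u_\epsilon$ is uniformly bounded by $1$ and converges pointwise to $s/\|s\|$ on $\{s\neq 0\}$, so dominated convergence handles the pairing against any test function there. On the interior of $\{s=0\}$ we have $\Delta^{S_M}s\equiv 0$, so the value assigned by $\xi$ outside $\mathrm{supp}(s)$ contributes nothing, and the boundary $\mathrm{supp}(s)\setminus\{s\neq 0\}$ is $\Delta^{S_M}s$-null in the distributional pairing. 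The main (and only) subtlety is this bookkeeping around the zero locus of $s$, which is precisely the reason for introducing the auxiliary measurable section $\xi$: it resolves the pointwise ambiguity of $s/\|s\|$ on $\{s=0\}$ without affecting either side of the inequality.
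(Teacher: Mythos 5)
The paper does not prove this proposition; it cites Hess--Schrader--Uhlenbrock \cite{MR602436}*{Proposition 2.2}, so there is no in-paper argument to compare against. Your regularization proof is the standard and essentially correct route: the Bochner identity, the pointwise Kato gradient bound via Cauchy--Schwarz, the smooth inequality for $u_\epsilon=\sqrt{\Vert s\Vert^2+\epsilon^2}$, and the distributional limit all check out with the sign conventions (nonnegative $\Delta^M$ and Bochner Laplacian $\Delta^{S_M}$) used in the paper.

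The one place you assert rather than prove is the final claim that $\mathrm{supp}(s)\setminus\{s\neq 0\}$ is ``$\Delta^{S_M}s$-null,'' and this is precisely what is needed to identify $\lim_{\epsilon\to 0}\mathrm{Re}\langle\Delta^{S_M}s,s/u_\epsilon\rangle$ with $\mathrm{Re}\langle\Delta^{S_M}s,\mathrm{sign}_\xi s\rangle$ independently of the choice of $\xi$. After dominated convergence you only obtain
\begin{equation*}
\int_M \Vert s\Vert\,\Delta^M\varphi\,dv_M\;\leqslant\;\int_{\{s\neq 0\}}\mathrm{Re}\bigl\langle\Delta^{S_M}s,\,s/\Vert s\Vert\bigr\rangle_{S_M}\,\varphi\,dv_M
\end{equation*}
for $\varphi\geqslant 0$, and to replace the region of integration by $M$ with the integrand $\mathrm{Re}\langle\Delta^{S_M}s,\mathrm{sign}_\xi s\rangle$, one must know that $\Delta^{S_M}s=0$ a.e.\ on the whole set $\{s=0\}$, not merely on its interior. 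This is true and not hard, but should be said: a smooth section vanishing on a measurable set $E$ has $\nabla^{S_M}s=0$ at every Lebesgue density point of $E$, hence a.e.\ on $E$; applying the same observation to the section $\nabla^{S_M}s$ (which now vanishes a.e.\ on $\{s=0\}$) gives $\nabla^{S_M}\nabla^{S_M}s=0$ a.e.\ on $\{s=0\}$, and taking the trace yields $\Delta^{S_M}s=0$ a.e.\ there. Once this is in place, the $\xi$-dependence is genuinely immaterial and your limit passage is complete; without it, the step from the $\{s\neq 0\}$-integral to the $M$-integral is unjustified.
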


Let $H^1_M$ be the Sobolev space on $M$ and $H^{-1}_M$ its dual through the $L^2$-inner product $\langle\cdot,\cdot\rangle_{L^2(M,S_M)}$. We note that since $\Vert s\Vert_{S_M}\in H^1_M$, \eqref{2.5} indeed holds in $H^{-1}_M$.

\subsection{The Neumann isoperimetric constant}

The Neumann isoperimetric constant of $M$ is defined by
\begin{equation}\label{2.7'}
IN(M)=\inf_{\begin{subarray}{c}
	\pa\Omega_1=S=\pa\Omega_2,\\ M=\Omega_1\cup S\cup\Omega_2
\end{subarray}}\frac{\mathrm{Area}(S)}{\min\{\mathrm{Vol}({\Omega_1}),\mathrm{Vol}({\Omega_2})\}^{(m-1)/m}},
\end{equation}
where the infimum is taken over all hypersurfaces $S$
dividing $M$ into two parts: $\Omega_1$ and $\Omega_2$, and $\mathrm{Area}(S)$ is the $(m-1)$-dimensional volume of $S$.

The Neumann isoperimetric constant here corresponding to the one obtained by taking $\alpha=m/(m-1)$ in \cite[Definition 9.2]{MR2962229} or see Dai-Wei-Zhang \cite{Dai_Wei_Zhang_Neumann}*{Definition 1.1}. This Neumann isoperimetric constant here remains unchanged when the Riemannian metric is rescaled, thanks to the specific choice of the power in the denominator of its definition. Additionally, we note that the Neumann isoperimetric constant is continuous with respect to $C^0$  convergence of Riemannian metrics.
Finally we mention that intuitively, the Neumann isoperimetric constant of a Riemannian manifold $M$ is small if there is a thin neck.

We have the following Poincaré-Wirtinger inequality, see \cite[Corollary 9.9]{MR2962229}.
\begin{proposition}
There are $C_{1,m}, C_{2,m}>0$ depending only on $m$ such that for any $f\in H^1_M$, we have
\begin{equation}\label{2.8}
	\begin{split}
	\int_M\vert\nabla f\vert^2dv_M\geqslant C_{1,m}IN(M)^2\bigg(\Big(&\int_M\vert f\vert^{2m/(m-2)}dv_M\Big)^{(m-2)/m}\\
	&-C_{2,m}\mathrm{Vol}(M)^{-2/m}\int_M\vert f\vert^2dv_M\bigg).
	\end{split}
\end{equation}
\end{proposition}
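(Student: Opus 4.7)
The plan is to derive \eqref{2.8} from the isoperimetric definition \eqref{2.7'} via the standard two-step Federer--Fleming / Moser procedure, adapted to the ``modulo constants'' character of the Neumann isoperimetric constant.

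First I will prove the $L^{1}$ analogue: for any $h \in H^1_M$ and any median $\mu$ of $h$,
\begin{equation*}
\|h - \mu\|_{L^{m/(m-1)}(M)} \leq IN(M)^{-1} \int_M |\nabla h|\, dv_M.
\end{equation*}
Indeed, both $\{h > \mu\}$ and $\{h < \mu\}$ have volume at most $\mathrm{Vol}(M)/2$, so applying \eqref{2.7'} to the super-level sets of $(h-\mu)^{\pm}$ gives $\mathrm{Area}(\{(h-\mu)^{\pm} = t\}) \geq IN(M) \cdot \mathrm{Vol}(\{(h-\mu)^{\pm} > t\})^{(m-1)/m}$. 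Integrating in $t$ via the coarea formula and combining with the layer-cake identity (or Minkowski's integral inequality applied in the exponent $m/(m-1) \geq 1$) converts this into the desired $L^{1}$-Sobolev inequality, with the factor comparing $\|h-\mu\|_{L^{m/(m-1)}}$ to $\|(h-\mu)^+\|_{L^{m/(m-1)}} + \|(h-\mu)^-\|_{L^{m/(m-1)}}$ absorbed into a dimensional constant.

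Next I will bootstrap to the $L^{2}$ version by a Moser-type power substitution. Setting $\beta := 2(m-1)/(m-2)$ and applying the $L^{1}$ bound to $|f|^{\beta}$, the chain rule together with Cauchy--Schwarz yields
\begin{equation*}
\bigl\||f|^{\beta} - \mu_{\beta}\bigr\|_{L^{m/(m-1)}} \leq \frac{\beta}{IN(M)} \|f\|_{L^{2m/(m-2)}}^{m/(m-2)} \|\nabla f\|_{L^{2}},
\end{equation*}
where $\mu_{\beta}$ is a median of $|f|^{\beta}$; here I exploit the identity $2(\beta-1) = 2m/(m-2)$. Chebyshev's inequality, applied to the square $|f|^{2}$, bounds the median by $\mu_{\beta} \leq (2\|f\|_{L^{2}}^{2}/\mathrm{Vol}(M))^{\beta/2}$, while the triangle inequality in $L^{m/(m-1)}$ gives
\begin{equation*}
\|f\|_{L^{2m/(m-2)}}^{\beta} \leq \bigl\||f|^{\beta} - \mu_{\beta}\bigr\|_{L^{m/(m-1)}} + \mu_{\beta}\mathrm{Vol}(M)^{(m-1)/m}.
\end{equation*}
I then split according to which of the two terms on the right dominates. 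Using the key exponent identity $\beta - m/(m-2) = 1$ extracts a clean power of $\|f\|_{L^{2m/(m-2)}}$ from the first term; squaring and collecting yields a bound of the form $\|f\|_{L^{2m/(m-2)}}^{2} \leq C\|\nabla f\|_{L^{2}}^{2}/IN(M)^{2} + C'\mathrm{Vol}(M)^{-2/m}\|f\|_{L^{2}}^{2}$, which after rearrangement is precisely \eqref{2.8}.

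The main obstacle is purely bookkeeping in the final combination: one must verify that the volume factor emerging from the median--Chebyshev estimate reduces to exactly $\mathrm{Vol}(M)^{-2/m}$ (this comes from the identity $-1 + 2(m-1)/(m\beta) = -2/m$ after raising $\mu_{\beta}\mathrm{Vol}(M)^{(m-1)/m}$ to the power $2/\beta$) and that both constants $C_{1,m}, C_{2,m}$ depend only on $m$. The conceptual content---isoperimetry implies Sobolev---sits entirely in the $L^{1}$ step; the rest is a careful calculus of exponents.
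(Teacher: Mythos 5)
The paper does not prove this proposition but cites it from Li, \emph{Geometric Analysis}, Corollary 9.9; your argument is a correct reconstruction of exactly that standard proof (the Federer--Fleming co-area/layer-cake argument giving the $L^{1}$ inequality about a median, followed by the Moser power substitution $h=|f|^{2(m-1)/(m-2)}$ and the Chebyshev bound on the median). All the exponent identities you invoke check out, and the remaining technicalities (Sard/approximation to justify applying the isoperimetric inequality to level sets of an $H^1$ function, and truncation of $f$ to legitimize the chain rule before the circular-looking Cauchy--Schwarz step) are routine.
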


\section{Proof of the main theorem}

In this section, we give the proof for the main theorem of this article.  As a preparation, we apply Moser's iteration technique \cite{Moser_Harnack} to prove the following proposition. Here we closely follow the proof of  \cite[Theorem 10.2]{MR2962229}.
\begin{proposition} Assume $\Sc_g\geqslant k_0$.
There is $C_{3,m}>0$ depending only on $m$ such that for any $s\in \ker (D^{S_M})$, we have
\begin{equation}\label{3.1}
	\begin{split}
		\Vert s\Vert_{L^{\infty}(M,S_M)}\leqslant C_{3,m}\Big(\frac{\vert k_0\vert^{m/4}}{IN(M)^{m/2}}+\mathrm{Vol}(M)^{-1/2}\Big)\Vert s\Vert_{L^{2}(M,S_M)}.
	\end{split}
\end{equation}
\end{proposition}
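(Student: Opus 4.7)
The plan is to run a Moser iteration on the pointwise norm $u := \Vert s\Vert_{S_M}$, following the approach of \cite[Theorem 10.2]{MR2962229}. Since $s\in\ker(D^{S_M})$, the Lichnerowicz formula \eqref{2.1} gives $\Delta^{S_M}s = -\tfrac{\Sc_g}{4}s$, and substituting this into Kato's inequality \eqref{2.5} together with $\Sc_g\geqslant k_0\geqslant -\vert k_0\vert$ produces the $H^{-1}_M$ inequality
\begin{equation*}
\Delta^M u \;\leqslant\; -\tfrac{\Sc_g}{4}\,u \;\leqslant\; \tfrac{\vert k_0\vert}{4}\,u.
\end{equation*}
Since $s$ is smooth by elliptic regularity, $u$ is Lipschitz, so $u\in H^1_M$ and every $L^p$ norm of $u$ is finite.

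For any $p\geqslant 2$, I would test this inequality against $u^{p-1}$ (first using the truncation $\min(u,L)$ to justify the test function in $H^1_M$, then sending $L\to\infty$). Integration by parts combined with the identity $|\nabla u^{p/2}|^2 = \tfrac{p^2}{4}u^{p-2}|\nabla u|^2$ yields
\begin{equation*}
\tfrac{4(p-1)}{p^2}\int_M|\nabla u^{p/2}|^2\,dv_M \;\leqslant\; \tfrac{\vert k_0\vert}{4}\int_M u^p\,dv_M.
\end{equation*}
Feeding the Poincaré-Wirtinger inequality \eqref{2.8} applied to $f := u^{p/2}$ into the left-hand side and solving for the high-exponent term gives, with $\kappa := m/(m-2)$, the reverse Hölder estimate
\begin{equation*}
\Vert u\Vert_{L^{p\kappa}(M)}^p \;\leqslant\; A_p\,\Vert u\Vert_{L^p(M)}^p,\qquad A_p := \tfrac{p^2\vert k_0\vert}{16(p-1)C_{1,m}IN(M)^2} + C_{2,m}\mathrm{Vol}(M)^{-2/m}.
\end{equation*}
Because $p^2/(p-1)\leqslant 2p$ when $p\geqslant 2$, one has $A_p\leqslant C_m\,p\,E$ where $E := \tfrac{\vert k_0\vert}{IN(M)^2}+\mathrm{Vol}(M)^{-2/m}$ and $C_m$ is a dimensional constant.

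Iterating the last estimate along $p_k := 2\kappa^k$ and taking $p_k$-th roots produces $\Vert u\Vert_{L^\infty(M)} \leqslant \prod_{k\geqslant 0}A_{p_k}^{1/p_k}\,\Vert u\Vert_{L^2(M)}$. Since $\sum_{k\geqslant 0}1/p_k = m/4$ and $\sum_{k\geqslant 0}(\log p_k)/p_k<\infty$, the infinite product is controlled by $C_m E^{m/4}$ for a dimensional constant $C_m$. The elementary inequality $(a+b)^{m/4}\leqslant C_m(a^{m/4}+b^{m/4})$ then converts this into
\begin{equation*}
E^{m/4}\;\leqslant\; C_m\Big(\tfrac{\vert k_0\vert^{m/4}}{IN(M)^{m/2}}+\mathrm{Vol}(M)^{-1/2}\Big),
\end{equation*}
which yields \eqref{3.1}. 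The main technical point is the truncation argument that justifies testing the $H^{-1}_M$ inequality against $u^{p-1}$; this is routine because $s$ is smooth and $u\in H^1_M$. The remainder is careful Moser bookkeeping aimed at keeping $\prod_k A_{p_k}^{1/p_k}$ finite and dimensionally controlled, with the critical numerology being that $\sum_k 1/p_k$ equals exactly $m/4$, which is what produces the exponents $m/4$ and $m/2$ appearing in the final bound.
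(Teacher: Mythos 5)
Your proof is correct and follows essentially the same route as the paper: Lichnerowicz plus Kato's inequality yields $\Delta^M\Vert s\Vert_{S_M}\leqslant\tfrac{|k_0|}{4}\Vert s\Vert_{S_M}$, and Moser iteration driven by the Poincar\'e--Wirtinger inequality \eqref{2.8} with exponents $p_k=2(m/(m-2))^k$ gives the $L^\infty$ bound with overall exponent $\sum_k 1/p_k=m/4$. The only differences are cosmetic bookkeeping (you bound $p^2/(p-1)\leqslant 2p$ and track $A_p\leqslant C_m pE$, while the paper uses $4(a-1)/a^2\geqslant 1/a$ and absorbs the $b^\ell$ factors directly), plus your explicit mention of the truncation step that justifies testing against $u^{p-1}$, which the paper leaves implicit.
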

\begin{proof}

For $s\in \ker (D^{S_M})$, by \eqref{2.1} we have
\begin{equation}
\Delta^{S_M}s=-\frac{\Sc_g}{4} s,
\end{equation}
which, together with \eqref{2.5}, implies the following inequality in $H^{-1}_M$
\begin{equation}\label{3.3}
\Delta^M \big(\Vert s\Vert_{S_M}\big)\leqslant \frac{\vert k_0\vert}{4}\Vert s\Vert_{S_M}.
\end{equation}

For any $a\geqslant 2$ and nonnegative $u\in H^1_M$, integrating by parts implies
\begin{equation}\label{3.4}
	\begin{split}
		\int_Mu^{a-1}\Delta^M udv_M&=(a-1)\int_Mu^{a-2}\big\vert\nabla u\big\vert^2dv_M\\
		&=\frac{4(a-1)}{a^2}\int_M\big\vert\nabla\big(u^{a/2}\big)\big\vert^2dv_M.
 	\end{split}
\end{equation}

Taking $u=\Vert s\Vert_{S_M}$ in \eqref{3.4} and then $f=\Vert s\Vert_{S_M}^{a/2}$ in \eqref{2.8}, by \eqref{3.3}, we deduce that
\begin{equation}
\begin{split}
\frac{\vert k_0\vert}{4}\int_M\Vert s\Vert_{S_M}^adv_M\geqslant \frac{C_{1,m}IN(M)^2}{a}\bigg(\Big(&\int_M\Vert s\Vert_{S_M}^{am/(m-2)}dv_M\Big)^{(m-2)/m}\\
&-C_{2,m}\mathrm{Vol}(M)^{-2/m}\int_M\Vert s\Vert_{S_M}^{a}dv_M\bigg),
\end{split}
\end{equation}
or equivalently,
\begin{equation}\label{3.6}
\begin{split}
    \Big(\frac{a\vert k_0\vert}{4C_{1,m}IN(M)^2}+C_{2,m}\mathrm{Vol}(M)^{-2/m}\Big)&\int_M\Vert s\Vert_{S_M}^adv_M\\
    &\geqslant\Big(\int_M\Vert s\Vert_{S_M}^{am/(m-2)}dv_M\Big)^{(m-2)/m}.
\end{split}
\end{equation}

By setting $a=2b^\ell$ for $b=m/(m-2),\ell\in\mathbb{N}$ in \eqref{3.6}, we obtain
\begin{equation}
\Vert s\Vert_{L^{2b^{\ell+1}}(M,S_M)}\leqslant\Big(\frac{a\vert k_0\vert}{4C_{1,m}IN(M)^2}+C_{2,m}\mathrm{Vol}(M)^{-2/m}\Big)^{1/2b^\ell}\Vert s\Vert_{L^{2b^{\ell}}(M,S_M)},
\end{equation}
which indicates that
\begin{equation}
	\begin{split}
\Vert s\Vert_{L^{\infty}(M,S_M)}&=\lim_{\ell\to\infty}\Vert s\Vert_{L^{2b^{\ell}}(M,S_M)}\\
&\leqslant b^{\sum_{\ell=0}^{\infty}\ell/2b^\ell}\Big(\frac{\vert k_0\vert}{2C_{1,m}IN(M)^2}+C_{2,m}\mathrm{Vol}(M)^{-2/m}\Big)^{m/4}\Vert s\Vert_{L^{2}(M,S_M)}.
	\end{split}
\end{equation}
To see this, it is sufficient to calculate that
\begin{equation}
	\begin{split}
\prod_{\ell=0}^{\infty}\Big(&\frac{b^\ell\vert k_0\vert}{2C_{1,m}IN(M)^2}+C_{2,m}\mathrm{Vol}(M)^{-2/m}\Big)^{1/2b^\ell}\\
&\leqslant\prod_{\ell=0}^{\infty}\Big(\frac{b^{\ell}\vert k_0\vert}{2C_{1,m}IN(M)^2}+b^{\ell}C_{2,m}\mathrm{Vol}(M)^{-2/m}\Big)^{1/2b^\ell}\\
&=b^{\sum_{\ell=0}^{\infty}\ell/2b^\ell}\Big(\frac{\vert k_0\vert}{2C_{1,m}IN(M)^2}+C_{2,m}\mathrm{Vol}(M)^{-2/m}\Big)^{\sum_{\ell=0}^{\infty}1/2b^\ell}\\
&=\Big(\frac{m}{m-2}\Big)^{(m^2-2m+4)/4}\Big(\frac{\vert k_0\vert}{2C_{1,m}IN(M)^2}+C_{2,m}\mathrm{Vol}(M)^{-2/m}\Big)^{m/4},
	\end{split}
\end{equation}
hence, \eqref{3.1} has been derived.
\end{proof}

By \eqref{2.2} and \eqref{2.4}, we see that \eqref{1.1} follows immediately from the following result. We shall imitate the proof of Li \cite[Lemma 7.2]{MR2962229}.

\begin{proposition} Assume that $\Sc_g\geqslant k_0$, then there is constant $C_{m}>0$ depending only on $m$ such that
\begin{equation}\label{3.10}
\dim_\mathbb{C}\ker \big(D^{S_M}\big)\leqslant C_m\Big(\frac{\vert k_0\vert^{m/2}\mathrm{Vol}(M)}{IN(M)^m}+1\Big).
\end{equation}
\end{proposition}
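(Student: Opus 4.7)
The plan is to leverage the pointwise $L^\infty$ estimate for harmonic spinors established in the previous proposition, combined with a standard ``rank-times-sup'' trick for bounding the dimension of a space of sections admitting uniform pointwise control.

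First, I would choose an $L^2$-orthonormal basis $\{s_1,\dots,s_N\}$ of $\ker(D^{S_M})$ with $N=\dim_\mathbb{C}\ker(D^{S_M})$, and form the ``Bergman-type'' density
\begin{equation}
K(x)=\sum_{i=1}^{N}\Vert s_i(x)\Vert_{S_M}^2,
\end{equation}
which is independent of the choice of orthonormal basis. Integrating yields $\int_M K(x)\,dv_M=N$, so the problem reduces to a pointwise bound on $K(x)$.

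Next, I would bound $K(x)$ using the previous proposition. Fix $x\in M$ and an orthonormal frame $\{e_\alpha\}_{\alpha=1}^{r}$ of $(S_M)_x$, where $r=\mathrm{rank}_\mathbb{C}(S_M)$ depends only on $m$. Writing $s_i(x)=\sum_\alpha c_i^\alpha e_\alpha$ and forming the harmonic spinor $t^\alpha:=\sum_i \overline{c_i^\alpha}\,s_i\in\ker(D^{S_M})$, one computes $\Vert t^\alpha\Vert_{L^2}^2=\sum_i|c_i^\alpha|^2$ and $\Vert t^\alpha(x)\Vert_{S_M}^2\geqslant\bigl(\sum_i|c_i^\alpha|^2\bigr)^2$ by Cauchy--Schwarz applied to the $\alpha$-th component. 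Applying \eqref{3.1} to $t^\alpha$ gives
\begin{equation}
\sum_i|c_i^\alpha|^2\leqslant C_{3,m}^2\Big(\frac{\vert k_0\vert^{m/4}}{IN(M)^{m/2}}+\mathrm{Vol}(M)^{-1/2}\Big)^2.
\end{equation}
Summing over $\alpha=1,\dots,r$ yields
\begin{equation}
K(x)=\sum_\alpha\sum_i|c_i^\alpha|^2\leqslant r\,C_{3,m}^2\Big(\frac{\vert k_0\vert^{m/4}}{IN(M)^{m/2}}+\mathrm{Vol}(M)^{-1/2}\Big)^2.
\end{equation}

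Finally, I would integrate over $M$, use $(a+b)^2\leqslant 2(a^2+b^2)$, and absorb the dimensional constants (including $r$, which depends only on $m$) into a single constant $C_m$:
\begin{equation}
N=\int_M K(x)\,dv_M\leqslant 2rC_{3,m}^2\Big(\frac{\vert k_0\vert^{m/2}\mathrm{Vol}(M)}{IN(M)^m}+1\Big),
\end{equation}
which is exactly \eqref{3.10}. Together with \eqref{2.2} and \eqref{2.4}, and noting that $\widehat{A}(M)=\mathrm{Ind}(D^{S_M^+})$ is bounded in absolute value by $\dim_\mathbb{C}\ker(D^{S_M^+})+\dim_\mathbb{C}\ker(D^{S_M^-})=\dim_\mathbb{C}\ker(D^{S_M})$, this delivers the main theorem.

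I do not anticipate a genuine obstacle here: the heavy analytic lifting (the Moser iteration leading to \eqref{3.1}) has already been done. The only subtle point is the pointwise rank bound on $K(x)$, which requires choosing test spinors $t^\alpha$ adapted to the pointwise fiber structure of $S_M$ at each $x$; this is the step that forces the factor $r=\mathrm{rank}_\mathbb{C}(S_M)$ to appear and that converts a pointwise $L^\infty\leftarrow L^2$ bound into a dimension bound.
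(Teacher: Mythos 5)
Your proposal is correct and follows essentially the same route as the paper: both arguments take an $L^2$-orthonormal basis $\{s_i\}$ of $\ker(D^{S_M})$, fix a point $x$ and an orthonormal frame of the fiber $S_{M,x}$, form the pointwise ``test'' harmonic spinors (your $t^\alpha$ are the paper's $s_{x,v_k}$, up to notation), apply the $L^\infty$-vs-$L^2$ bound from \eqref{3.1} to them, sum over the fiber frame to bound the Bergman density $K(x)=\sum_i\Vert s_i(x)\Vert^2$, and integrate over $M$. The only cosmetic difference is how you extract the bound $\sum_i|c_i^\alpha|^2\leqslant C_{3,m}^2(\cdots)^2$: you read off the $\alpha$-th component of $t^\alpha(x)$ directly and divide, whereas the paper uses a reproducing-kernel/duality computation with the projection $p_{\ker(D^{S_M})}$; both yield the same estimate with the same constant.
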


\begin{proof}
Let us take an orthonormal basis $\{s_i\}$ of $\ker(D^{S_M})$. For any $x\in M$, let $\{v_k\}$ be an orthonormal basis of the fiber $S_{M,x}$ at $x$, then we take $s_{x,v_k}\in C^\infty(M,S_M)$ by
\begin{equation}
s_{x,v_k}=\sum_i \langle v_k,s_i(x)\rangle_{S_M}s_i.
\end{equation}
For any $s\in L^2(M,S_M)$, we denote its projection to $\ker(D^{S_M})$ by $p_{\ker(D^{S_M})}s$, that is,
\begin{equation}
p_{\ker(D^{S_M})}s=\sum_{i}\langle s, s_i\rangle_{L^2(M,S_M)}s_i,
\end{equation}
then we compute that
\begin{equation}
	\begin{split}
	\langle s_{x,v_k},s\rangle_{L^2(M,S_M)}&=\sum_i \langle v_k,s_i(x)\rangle_{S_M}\langle s_i,s\rangle_{L^2(M,S_M)}\\
	&=\big\langle v_k,(p_{\ker(D^{S_M})}s)(x)\big\rangle_{S_M},
	\end{split}
\end{equation}
which, together with \eqref{3.1}, implies that
\begin{equation}
	\begin{split}
\big\vert\langle s_{x,v_k},s\rangle_{L^2(M,S_M)}\big\vert&\leqslant \big\Vert p_{\ker(D^{S_M})}s\big\Vert_{C^0(M,S_M)}\\
&\leqslant C_{3,m}\Big(\frac{\vert k_0\vert^{m/4}}{IN(M)^{m/2}}+\mathrm{Vol}(M)^{-1/2}\Big)\big\Vert p_{\ker(D^{S_M})}s\big\Vert_{L^2(M,S_M)}\\
&\leqslant C_{3,m}\Big(\frac{\vert k_0\vert^{m/4}}{IN(M)^{m/2}}+\mathrm{Vol}(M)^{-1/2}\Big)\Vert s\Vert_{L^2(M,S_M)}.
	\end{split}
\end{equation}
Therefore, we have
\begin{equation}
\Vert s_{x,v_k}\Vert_{L^2(M,S_M)}\leqslant C_{3,m}\Big(\frac{\vert k_0\vert^{m/4}}{IN(M)^{m/2}}+\mathrm{Vol}(M)^{-1/2}\Big).
\end{equation}
In other words,
\begin{equation}
	\sqrt{\sum_{i}\big\vert\langle v_k,s_i(x)\rangle_{S_M}\big\vert^2}\leqslant C_{3,m}\Big(\frac{\vert k_0\vert^{m/4}}{IN(M)^{m/2}}+\mathrm{Vol}(M)^{-1/2}\Big).
\end{equation}
Squaring and summing over $k$, we see that
\begin{equation}
\sum_{i}\Vert s_i(x)\Vert^2_{S_M}\leqslant \dim_\mathbb{C}S_M\cdot C_{3,m}^2\Big(\frac{\vert k_0\vert^{m/4}}{IN(M)^{m/2}}+\mathrm{Vol}(M)^{-1/2}\Big)^2,
\end{equation}
then integrating over $M$, we deduce \eqref{3.10}.  
\end{proof} 

\section{Examples of Riemannian manifolds with large A-hat genus, small scalar curvature, and bounded volume}

In this section, we construct a sequence of  closed Riemannian manifolds with bounded volume, arbitrarily large $\widehat{A}$ genus, and arbitrarily small scalar curvature. These examples demonstrate that the Neumann isoperimetric constant cannot be removed in  Theorem \ref{T1.1} for general Riemannian manifolds.

Let us recall the following proposition essentially due to Gromov-Lawson \cite[Proposition 5.1]{Gromov_Lawson_fundamental_group} and Schoen-Yau \cite[Corollary 7]{Schoen_Yau_structure_psc}. However, the following state is a quantitative version due to Sweeney \cite[Proposition 4.2]{sweeney2023examplesscalarspherestability}.

\begin{proposition} \label{proposition: connected_sum_scalar_volume}
    Given  two Riemannian manifolds $(M,g)$ and $(M', g')$, then for any $\varepsilon>0$, there exists a complete Riemannian metric $g^\#$ on the connected sum $M\# M'$ such that
    \begin{enumerate}
        \item $\Sc_{g^\#} \geqslant \min\big\{\min_{x \in M}\Sc_{g}(x), \min_{x' \in M'}\Sc_{g'}(x')\big\} -{\varepsilon}$,
        \item $|\mathrm{Vol}({M\# M'}) - \mathrm{Vol}({M}) - \mathrm{Vol}({M'})| < {\varepsilon}$.
    \end{enumerate}
\end{proposition}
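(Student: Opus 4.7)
The plan is to carry out the classical Gromov--Lawson surgery construction for a connected sum while tracking both the scalar curvature drop and the volume change quantitatively, along the lines of Sweeney. First, fix points $p \in M$ and $p' \in M'$ and two small parameters $r_0, \delta > 0$ to be chosen at the end. In geodesic normal coordinates on the balls $B(p, r_0)$ and $B(p', r_0)$, replace the restricted metric by a rotationally symmetric warped-product metric of the form $dt^2 + f(t)^2 g_{\mathbb{S}^{m-1}}$ for a profile $f$ to be constructed; outside the two balls, keep $g$ and $g'$ unchanged, so the exterior contributes to $\Sc$ and $\mathrm{Vol}$ exactly as in the original manifolds.

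The core step is the choice of the profile $f$. Following Gromov--Lawson, describe $f$ as a curve $(t(s), f(s))$ in the first quadrant of the $(t,f)$-plane, starting tangent to the diagonal $f = t$ (matching the flat Euclidean profile near the center of the original ball) and ending on a horizontal ray at height $f \equiv \delta$; then glue the two bent profiles along a short cylinder $\mathbb{S}^{m-1} \times [0, L]$ of radius $\delta$ to obtain a smooth metric $g^\#$ on $M \# M'$. The scalar curvature of the warped region is
\begin{equation*}
\Sc \;=\; \frac{(m-1)(m-2)\bigl(1 - (f')^2\bigr)}{f^2} - 2(m-1)\frac{f''}{f}.
\end{equation*}
Because the surgery has codimension $m \geqslant 3$, the sphere term $(m-1)(m-2)/f^2$ is strongly positive on the thin tube and dominates the bending contribution $-2(m-1)f''/f$ as long as the bending is slow. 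A careful but essentially elementary calibration of the bending rate -- the quantitative content of Sweeney's proposition -- forces the scalar curvature on the transition region to drop by at most $\varepsilon$ below $\min\{\min \Sc_g, \min \Sc_{g'}\}$.

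For the volume, the only region where the metric differs from the disjoint union of $(M,g)$ and $(M',g')$ is the union of the two balls of radius $r_0$ and the neck of radius $\delta$ and length $L$. Their contribution to the volume difference is bounded by $C_m (r_0^m + \delta^{m-1} L)$, which becomes smaller than $\varepsilon$ once $r_0$ and $\delta$ are taken small enough (with $L$ of order $1$, say).

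The main obstacle is the coupled choice of the three parameters $r_0$, $\delta$, and the bending profile: the bending must be slow enough to keep $|f''/f|$ much smaller than $(m-2)/f^2$ so that the scalar curvature lower bound is preserved up to $\varepsilon$, yet the resulting curve must still connect smoothly both to the Euclidean graph near $f = t$ and to the horizontal cylinder $f \equiv \delta$. Coordinating these requirements -- while simultaneously making the neck thin enough to control the volume -- is the technical heart of the argument and is exactly what Sweeney makes explicit in the quantitative statement.
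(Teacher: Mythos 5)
The paper does not actually prove this proposition: it is quoted as a quantitative sharpening of Gromov--Lawson \cite{Gromov_Lawson_fundamental_group}*{Proposition 5.1} and Schoen--Yau \cite{Schoen_Yau_structure_psc}*{Corollary 7}, and the reader is referred to Sweeney \cite{sweeney2023examplesscalarspherestability}*{Proposition 4.2} for the proof. So there is no in-paper argument to compare against; I can only assess your sketch on its own terms.

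Your overall strategy --- the Gromov--Lawson tube construction, the warped-product scalar curvature formula, the codimension-$m \geqslant 3$ dominance of the sphere term, and the $O(r_0^m + \delta^{m-1}L)$ volume estimate --- is correct and is indeed the route Sweeney follows. However, the step ``replace the restricted metric by a rotationally symmetric warped-product metric'' inside $B(p, r_0)$ while ``keeping $g$ unchanged outside'' glosses over what is in fact the technical heart of the argument. The original metric $g$ is not rotationally symmetric near $p$: in normal coordinates it agrees with the Euclidean metric only up to $O(r^2)$ corrections, and its second derivatives --- hence curvature --- differ at $O(1)$. A literal warped product $dt^2 + f(t)^2 g_{\mathbb{S}^{m-1}}$ therefore cannot be glued $C^2$-smoothly to $g$ along $\partial B(p, r_0)$, and a naive interpolation in a collar can change the scalar curvature by $O(1)$, not $O(\varepsilon)$. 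Gromov--Lawson resolve this not by replacing the metric inside the ball, but by bending the embedding $M \hookrightarrow M \times \mathbb{R}$: the deformed hypersurface agrees with $M \times \{0\}$ outside $B(p, r_0)$, so the induced metric is exactly $g$ there, while inside the ball the induced metric is only \emph{approximately} the warped product you wrote down, with additional contributions from the second fundamental form of the geodesic spheres of $(M, g)$ and from the ambient curvature $R^g$. Controlling those error terms --- not merely the profile term $-2(m-1)f''/f$ --- by first fixing $r_0$ small relative to $\sup_{B(p, r_0)}\lvert R^g\rvert$ and then choosing the bending angle and the curvature of the bending curve small relative to $r_0$, is precisely what makes Sweeney's statement quantitative, and it is the piece your sketch needs to fill in (or replace by a preliminary $C^2$-small deformation of $g$ to a rotationally symmetric model near $p$, which requires essentially the same estimates).
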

\begin{example} 
The Kummer surface $K$, the hyperplane in the complex projective space $\mb {CP}^{3}$  given by
the equation $ z_1^4+z_2^4+z_3^4 +z_4^4=0$ , is spin and has 
$\widehat{A}(K) =2$, and hence does not
admit a metric of positive scalar curvature.
More generally if $K$ is a smooth, spin, complex hypersurface in $\mb{CP}^{n+1}$ of degree $d = n+2$,  
    then
    \begin{itemize}
        \item  by Yau's solution of Calabi conjecture in \cite{Yau_Solution_to_Calabi},  $K$ carries a metric $g$ with $\Sc_g =0$, in fact, with $\Ric_g = 0$,

        \item 
        we have $\widehat{A}(K)\neq 0$ for even $n$.
    \end{itemize}
 We now assume that the Riemannian manifold $(M_1,g_1)$  has scalar curvature $\Sc_{g_1} = 0$. By rescaling we can further assume that $\mathrm{Vol}(M_1)=\frac{1}{C}$ for a $C>0$ to be defined. We consider the connected sum $M_2 =M_1\# M_1$. Then by Proposition \ref{proposition: connected_sum_scalar_volume}, there exists a complete, smooth Riemannian metric $g_2$ on $M_2$ such that
 \begin{itemize}
     \item $\Sc_{g_2} \geqslant - \frac{\varepsilon}{2}$ on $M_2$,
     \item $\mathrm{Vol}({M_2}) \leqslant \frac{2}{C} + \frac{\varepsilon}{2}$.
 \end{itemize}
 Note that $\hat{A}(M_2) = 2\hat{A}(M_1)$. Moreover, we consider $M_4 = M_2 \# M_2$. Similarly, by Proposition \ref{proposition: connected_sum_scalar_volume}, we can construct a complete Riemannian manifold $(M_4, g_4)$ such that
 \begin{itemize}
     \item $\Sc_{g_4} \geqslant -( \frac{\varepsilon}{2} +\frac{\varepsilon}{2^2})$ on $M_4$,
     \item $\mathrm{Vol}({M_4}) \leqslant \frac{4}{C} + (\frac{\varepsilon}{2}  + \frac{\varepsilon}{2^2})$.
 \end{itemize}
 Hence, we can construct a complete Riemannian manifold $(M_{2^\ell}, g_{2^\ell})$ inductively by $$M_{2^\ell} = M_{2^{\ell-1}} \#  M_{2^{\ell-1}}$$ 
 such that 
  \begin{itemize}
     \item $\Sc_{g_{2^\ell}} \geqslant -( \frac{\varepsilon}{2} +\frac{\varepsilon}{2^2} + \cdots + \frac{\varepsilon}{2^\ell})$ on $M_{2^\ell}$,
     \item $\mathrm{Vol}({M_{2^\ell}}) \leqslant \frac{2^\ell}{C} + (\frac{\varepsilon}{2}  + \frac{\varepsilon}{2^2})$.
 \end{itemize}
 Note that $\widehat{A}(M_{2^\ell}) = 2^\ell\widehat{A}(M)$. Moreover, if we take $C= 2^{\ell+1}$ and set $\varepsilon < \frac{1}{2}$, then $\mathrm{Vol}({M_{2^\ell}}) \leqslant 1$ for all $\ell$. 

 When $\ell$ is large enough, the Riemannian manifolds  $(M_{2^\ell}, g_{2^\ell})$ have
 bounded volume, arbitrarily  large $\widehat{A}$ genus, and arbitrarily small scalar curvature.
 These examples show that the Neumann isoperimetric constant in Theorem \ref{T1.1} is necessary.
\end{example}

\section{An open problem}

Finally we raise an open problem on the Neumann isoperimetric constant.

\begin{definition}
Let $k_0$ be a constant. We define

$$IN_{k_0} (M) = \sup ~ \{ IN(M, g)\mid \Sc_g \geqslant k_0, \mathrm{Vol}_g (M)\leqslant 1\},$$
where $ IN(M, g) $ is the Neumman isoperimetric constant of a Riemannian manifold $(M,g)$, $\mathrm{Vol}_g (M)$ is the volume of $(M, g)$,  and the superemum is taken over all Riemannian metrics $g$ over $M$ whose scalar curvature is greater than or equal to $k_0$ and volume is at most $1$.

\end{definition}

While the invariant $IN_{k_0} (M)$ may be hard to compute, a solution to the following problem can be helpful in estimating it.

\begin{problem}
Use conformal changes of a given Riemannian metric to give a lower bound
for $IN_{k_0} (M)$. More precisely, given a Riemannian metric $g_0$  on $M$ and a constant $k_0$, find a lower bound for $$IN_{k_0, g_0} (M) = \sup ~ \{ IN(M, g)\mid g=e^fg_0, \Sc_g \geqslant k_0, \mathrm{Vol}_g (M)\leqslant 1\},$$
where the supremum is taken over all smooth functions $f$ on $M$ 
such that the Riemannian metric $g=e^fg_0$ has  scalar curvature greater than or equal to $k_0$ and volume at most $1$.
\end{problem}

The motivation of finding such a lower bound is that it can be used to improve the upper bound for  $\widehat{A}$ genus of $M$ in Theorem 1.1.
The case of aspherical manifolds is of particular interest.

We remark that  the scalar curvature can be computed explicitly after a conformal change of a Riemannian metric. If $g=e^fg_0$, we then have
$$\Sc_g =e^{-f} (    \Delta_{g_0} f +\Sc_{g_0}    ),$$
where  $ \Delta_{g_0}$ is the Laplacian associated to the Riemannian metric $g_0$.

\nocite{WXY_decay,Wang:2021um,Wang:2021tq}


\addcontentsline{toc}{section}{References}

\begin{bibdiv}
	\begin{biblist}
		
		\bib{Agol_simplicial_volume}{incollection}{
			author={Agol, Ian},
			author={Storm, Peter~A.},
			author={Thurston, William~P.},
			title={Lower bounds on volumes of hyperbolic {H}aken 3-manifolds},
			date={[2022] \copyright 2022},
			booktitle={Collected works of {W}illiam {P}. {T}hurston with commentary.
				{V}ol. {II}. 3-manifolds, complexity and geometric group theory},
			publisher={Amer. Math. Soc., Providence, RI},
			pages={349\ndash 373},
			note={With an appendix by Nathan Dunfield, Reprint of [2328715]},
			review={\MR{4556477}},
		}
		
		\bib{MR236950}{article}{
			author={Atiyah, M.~F.},
			author={Singer, I.~M.},
			title={The index of elliptic operators. {I}},
			date={1968},
			ISSN={0003-486X},
			journal={Ann. of Math. (2)},
			volume={87},
			pages={484\ndash 530},
			url={https://doi.org/10.2307/1970715},
			review={\MR{236950}},
		}
		
		\bib{MR236952}{article}{
			author={Atiyah, M.~F.},
			author={Singer, I.~M.},
			title={The index of elliptic operators. {III}},
			date={1968},
			ISSN={0003-486X},
			journal={Ann. of Math. (2)},
			volume={87},
			pages={546\ndash 604},
			url={https://doi.org/10.2307/1970717},
			review={\MR{236952}},
		}
		
		\bib{Bamler_C0_RF_proof}{article}{
			author={Bamler, Richard~H.},
			title={A {R}icci flow proof of a result by {G}romov on lower bounds for
				scalar curvature},
			date={2016},
			ISSN={1073-2780,1945-001X},
			journal={Math. Res. Lett.},
			volume={23},
			number={2},
			pages={325\ndash 337},
			url={https://doi.org/10.4310/MRL.2016.v23.n2.a2},
			review={\MR{3512888}},
		}
		
		\bib{MR2273508}{book}{
			author={Berline, Nicole},
			author={Getzler, Ezra},
			author={Vergne, Mich{\`e}le},
			title={Heat kernels and {D}irac operators},
			series={Grundlehren Text Editions},
			publisher={Springer-Verlag, Berlin},
			date={2004},
			ISBN={3-540-20062-2},
			url={https://mathscinet.ams.org/mathscinet-getitem?mr=2273508},
			note={Corrected reprint of the 1992 original},
			review={\MR{2273508}},
		}
		
		\bib{Sabine_Roman_volume}{article}{
			author={Braun, Sabine},
			author={Sauer, Roman},
			title={Volume and macroscopic scalar curvature},
			date={2021},
			ISSN={1016-443X,1420-8970},
			journal={Geom. Funct. Anal.},
			volume={31},
			number={6},
			pages={1321\ndash 1376},
			url={https://doi.org/10.1007/s00039-021-00588-y},
			review={\MR{4386411}},
		}
		
		\bib{Connes_transverse_fundamental_class}{incollection}{
			author={Connes, A.},
			title={Cyclic cohomology and the transverse fundamental class of a
				foliation},
			date={1986},
			booktitle={Geometric methods in operator algebras ({K}yoto, 1983)},
			series={Pitman Res. Notes Math. Ser.},
			volume={123},
			publisher={Longman Sci. Tech., Harlow},
			pages={52\ndash 144},
			review={\MR{866491}},
		}
		
		\bib{Dai_Wei_Zhang_Neumann}{article}{
			author={Dai, Xianzhe},
			author={Wei, Guofang},
			author={Zhang, Zhenlei},
			title={Neumann isoperimetric constant estimate for convex domains},
			date={2018},
			ISSN={0002-9939,1088-6826},
			journal={Proc. Amer. Math. Soc.},
			volume={146},
			number={8},
			pages={3509\ndash 3514},
			url={https://doi.org/10.1090/proc/14079},
			review={\MR{3803675}},
		}
		
		\bib{Gallot_isoperimetric}{incollection}{
			author={Gallot, Sylvestre},
			title={Isoperimetric inequalities based on integral norms of {R}icci
				curvature},
			date={1988},
			pages={191\ndash 216},
			note={Colloque Paul L\'evy sur les Processus Stochastiques (Palaiseau,
				1987)},
			review={\MR{976219}},
		}
		
		\bib{Gromov_Lawson_fundamental_group}{article}{
			author={Gromov, Mikhael},
			author={Lawson, H.~Blaine, Jr.},
			title={Spin and scalar curvature in the presence of a fundamental group.
				{I}},
			date={1980},
			ISSN={0003-486X},
			journal={Ann. of Math. (2)},
			volume={111},
			number={2},
			pages={209\ndash 230},
			url={https://doi.org/10.2307/1971198},
			review={\MR{569070}},
		}
		
		\bib{GromovLawson}{article}{
			author={Gromov, Mikhael},
			author={Lawson, H.~Blaine, Jr.},
			title={Positive scalar curvature and the {D}irac operator on complete
				{R}iemannian manifolds},
			date={1983},
			ISSN={0073-8301},
			journal={Inst. Hautes \'{E}tudes Sci. Publ. Math.},
			number={58},
			pages={83\ndash 196 (1984)},
			url={http://www.numdam.org/item?id=PMIHES_1983__58__83_0},
			review={\MR{720933}},
		}
		
		\bib{Gromov_C0}{article}{
			author={Gromov, Misha},
			title={Dirac and {P}lateau billiards in domains with corners},
			date={2014},
			ISSN={1895-1074,1644-3616},
			journal={Cent. Eur. J. Math.},
			volume={12},
			number={8},
			pages={1109\ndash 1156},
			url={https://doi.org/10.2478/s11533-013-0399-1},
			review={\MR{3201312}},
		}
		
		\bib{Gromov4lectures}{incollection}{
			author={Gromov, Misha},
			title={Four lectures on scalar curvature},
			date={[2023] \copyright 2023},
			booktitle={Perspectives in scalar curvature. {V}ol. 1},
			publisher={World Sci. Publ., Hackensack, NJ},
			pages={1\ndash 514},
			review={\MR{4577903}},
		}
		
		\bib{MR602436}{article}{
			author={Hess, H.},
			author={Schrader, R.},
			author={Uhlenbrock, D.~A.},
			title={Kato's inequality and the spectral distribution of {L}aplacians
				on compact {R}iemannian manifolds},
			date={1980},
			ISSN={0022-040X,1945-743X},
			journal={J. Differential Geometry},
			volume={15},
			number={1},
			pages={27\ndash 37},
			url={http://projecteuclid.org/euclid.jdg/1214435380},
			review={\MR{602436}},
		}
		
		\bib{Kazdan_Warner_scalar_curvature}{article}{
			author={Kazdan, Jerry~L.},
			author={Warner, F.~W.},
			title={Scalar curvature and conformal deformation of {R}iemannian
				structure},
			date={1975},
			ISSN={0022-040X,1945-743X},
			journal={J. Differential Geometry},
			volume={10},
			pages={113\ndash 134},
			url={http://projecteuclid.org/euclid.jdg/1214432678},
			review={\MR{365409}},
		}
		
		\bib{MR2962229}{book}{
			author={Li, Peter},
			title={Geometric analysis},
			series={Cambridge Studies in Advanced Mathematics},
			publisher={Cambridge University Press, Cambridge},
			date={2012},
			volume={134},
			ISBN={978-1-107-02064-1},
			url={https://doi.org/10.1017/CBO9781139105798},
			review={\MR{2962229}},
		}
		
		\bib{Lichnerowicz_formula}{article}{
			author={Lichnerowicz, Andr\'e},
			title={Spineurs harmoniques},
			date={1963},
			ISSN={0001-4036},
			journal={C. R. Acad. Sci. Paris},
			volume={257},
			pages={7\ndash 9},
			review={\MR{156292}},
		}
		
		\bib{Moser_Harnack}{article}{
			author={Moser, J\"urgen},
			title={On {H}arnack's theorem for elliptic differential equations},
			date={1961},
			ISSN={0010-3640,1097-0312},
			journal={Comm. Pure Appl. Math.},
			volume={14},
			pages={577\ndash 591},
			url={https://doi.org/10.1002/cpa.3160140329},
			review={\MR{159138}},
		}
		
		\bib{Rosenberg_1}{article}{
			author={Rosenberg, Jonathan},
			title={{$C^{\ast} $}-algebras, positive scalar curvature, and the
				{N}ovikov conjecture},
			date={1983},
			ISSN={0073-8301},
			journal={Inst. Hautes \'{E}tudes Sci. Publ. Math.},
			number={58},
			pages={197\ndash 212 (1984)},
			url={http://www.numdam.org/item?id=PMIHES_1983__58__197_0},
			review={\MR{720934}},
		}
		
		\bib{Rosenberg_psc_report}{incollection}{
			author={Rosenberg, Jonathan},
			title={Manifolds of positive scalar curvature: a progress report},
			date={2007},
			booktitle={Surveys in differential geometry. {V}ol. {XI}},
			series={Surv. Differ. Geom.},
			volume={11},
			publisher={Int. Press, Somerville, MA},
			pages={259\ndash 294},
			url={https://doi.org/10.4310/SDG.2006.v11.n1.a9},
			review={\MR{2408269}},
		}
		
		\bib{Schoen_Yau_structure_psc}{article}{
			author={Schoen, R.},
			author={Yau, S.~T.},
			title={On the structure of manifolds with positive scalar curvature},
			date={1979},
			ISSN={0025-2611},
			journal={Manuscripta Math.},
			volume={28},
			number={1-3},
			pages={159\ndash 183},
			url={https://doi.org/10.1007/BF01647970},
			review={\MR{535700}},
		}
		
		\bib{Schoen_Yau_imcompressible}{article}{
			author={Schoen, R.},
			author={Yau, Shing~Tung},
			title={Existence of incompressible minimal surfaces and the topology of
				three-dimensional manifolds with nonnegative scalar curvature},
			date={1979},
			ISSN={0003-486X},
			journal={Ann. of Math. (2)},
			volume={110},
			number={1},
			pages={127\ndash 142},
			url={https://doi.org/10.2307/1971247},
			review={\MR{541332}},
		}
		
		\bib{Schoen_Yau_pmt1}{article}{
			author={Schoen, Richard},
			author={Yau, Shing~Tung},
			title={On the proof of the positive mass conjecture in general
				relativity},
			date={1979},
			ISSN={0010-3616,1432-0916},
			journal={Comm. Math. Phys.},
			volume={65},
			number={1},
			pages={45\ndash 76},
			url={http://projecteuclid.org/euclid.cmp/1103904790},
			review={\MR{526976}},
		}
		
		\bib{Schoen_Yau_psc_higher}{incollection}{
			author={Schoen, Richard},
			author={Yau, Shing-Tung},
			title={Positive scalar curvature and minimal hypersurface
				singularities},
			date={[2022] \copyright 2022},
			booktitle={Surveys in differential geometry 2019. {D}ifferential geometry,
				{C}alabi-{Y}au theory, and general relativity. {P}art 2},
			series={Surv. Differ. Geom.},
			volume={24},
			publisher={Int. Press, Boston, MA},
			pages={441\ndash 480},
			review={\MR{4479726}},
		}
		
		\bib{Stolz_simply_connected}{article}{
			author={Stolz, Stephan},
			title={Simply connected manifolds of positive scalar curvature},
			date={1992},
			ISSN={0003-486X},
			journal={Ann. of Math. (2)},
			volume={136},
			number={3},
			pages={511\ndash 540},
			url={https://doi.org/10.2307/2946598},
			review={\MR{1189863}},
		}
		
		\bib{sweeney2023examplesscalarspherestability}{article}{
			author={Sweeney~Jr, Paul},
			title={Examples for scalar sphere stability},
			date={2023},
			eprint={2301.01292},
			url={https://arxiv.org/abs/2301.01292},
		}
		
		\bib{TWY_Roe}{article}{
			author={Tang, Xiang},
			author={Willett, Rufus},
			author={Yao, Yi-Jun},
			title={Roe {$C^*$}-algebra for groupoids and generalized {L}ichnerowicz
				vanishing theorem for foliated manifolds},
			date={2018},
			ISSN={0025-5874,1432-1823},
			journal={Math. Z.},
			volume={290},
			number={3-4},
			pages={1309\ndash 1338},
			url={https://doi.org/10.1007/s00209-018-2064-7},
			review={\MR{3856855}},
		}
		
		\bib{Taubes_Seiberg_Witten}{article}{
			author={Taubes, Clifford~Henry},
			title={The {S}eiberg-{W}itten invariants and symplectic forms},
			date={1994},
			ISSN={1073-2780},
			journal={Math. Res. Lett.},
			volume={1},
			number={6},
			pages={809\ndash 822},
			url={https://doi.org/10.4310/MRL.1994.v1.n6.a15},
			review={\MR{1306023}},
		}
		
		\bib{Wang:2021tq}{article}{
			author={Wang, Jinmin},
			author={Xie, Zhizhang},
			author={Yu, Guoliang},
			title={On {G}romov's dihedral extremality and rigidity conjectures},
			date={2021},
			eprint={2112.01510},
			url={https://arxiv.org/pdf/2112.01510.pdf},
		}
		
		\bib{Wang:2021um}{article}{
			author={Wang, Jinmin},
			author={Xie, Zhizhang},
			author={Yu, Guoliang},
			title={A proof of {G}romov's cube inequality on scalar curvature},
			date={2021},
			journal={to appear in JDG},
			eprint={2105.12054},
			url={https://arxiv.org/pdf/2105.12054.pdf},
		}
		
		\bib{WXY_decay}{article}{
			author={Wang, Jinmin},
			author={Xie, Zhizhang},
			author={Yu, Guoliang},
			title={Decay of scalar curvature on uniformly contractible manifolds
				with finite asymptotic dimension},
			date={2024},
			ISSN={0010-3640,1097-0312},
			journal={Comm. Pure Appl. Math.},
			volume={77},
			number={1},
			pages={372\ndash 440},
			review={\MR{4666628}},
		}
		
		\bib{Witten_monopoles_four}{article}{
			author={Witten, Edward},
			title={Monopoles and four-manifolds},
			date={1994},
			ISSN={1073-2780},
			journal={Math. Res. Lett.},
			volume={1},
			number={6},
			pages={769\ndash 796},
			url={https://doi.org/10.4310/MRL.1994.v1.n6.a13},
			review={\MR{1306021}},
		}
		
		\bib{Yau_Solution_to_Calabi}{article}{
			author={Yau, Shing~Tung},
			title={On the {R}icci curvature of a compact {K}\"ahler manifold and the
				complex {M}onge-{A}mp\`ere equation. {I}},
			date={1978},
			ISSN={0010-3640,1097-0312},
			journal={Comm. Pure Appl. Math.},
			volume={31},
			number={3},
			pages={339\ndash 411},
			url={https://doi.org/10.1002/cpa.3160310304},
			review={\MR{480350}},
		}
		
		\bib{Zhang_psc_foliation}{article}{
			author={Zhang, Weiping},
			title={Positive scalar curvature on foliations},
			date={2017},
			ISSN={0003-486X,1939-8980},
			journal={Ann. of Math. (2)},
			volume={185},
			number={3},
			pages={1035\ndash 1068},
			url={https://doi.org/10.4007/annals.2017.185.3.9},
			review={\MR{3664818}},
		}
		
	\end{biblist}
\end{bibdiv}

\end{document}